\numberwithin{equation}{section}
\begin{document}

{\theoremstyle{theorem}
    \newtheorem{theorem}{\bf Theorem}[section]
    \newtheorem{proposition}[theorem]{\bf Proposition}
    \newtheorem{conjecture}[theorem]{\bf Conjecture}
    \newtheorem{claim}{\bf Claim}[theorem]
    \newtheorem{lemma}[theorem]{\bf Lemma}
    \newtheorem{corollary}[theorem]{\bf Corollary}
}
{\theoremstyle{remark}
    \newtheorem{remark}[theorem]{\bf Remark}
    \newtheorem{example}[theorem]{\bf Example}
}
{\theoremstyle{definition}
    \newtheorem{definition}[theorem]{\bf Definition}
    \newtheorem{question}[theorem]{\bf Question}
}

\def\C{{\mathcal C}}
\def\H{{\mathcal H}}
\def\x{{\bf x}}
\def\y{{\bf y}}
\def\z{{\bf z}}
\def\w{{\bf w}}
\def\u{{\bf u}}
\def\height{\operatorname{ht}}

\title{Cohen-Macaulay Admissible Clutters}
 
\author{Huy T\`{a}i H\`{a}}
\address{Tulane University\\
Department of Mathematics\\
6823 St. Charles Avenue\\
New Orleans, LA 70118 }
\email{tai@math.tulane.edu}
\urladdr{http://www.math.tulane.edu/$\sim$tai/}

\author{Susan Morey}
\address{Department of Mathematics \\
Texas State University\\
601 University Drive\\ 
San Marcos, TX 78666}
\email{morey@txstate.edu}
\urladdr{http://www.txstate.edu/$\sim$sm26/}

\author{Rafael H. Villarreal}
\address{Departamento de Matem\'aticas\\
Centro de Investigaci\'on y de Estudios
Avanzados del IPN\\
Apartado Postal 14--740 \\
07000 Mexico City, D.F.}
\email{vila@math.cinvestav.mx}

\keywords{monomial ideals, edge ideals, Cohen-Macaulay, clutters, hypergraphs, Alexander dual, linear quotients}  
\subjclass[2000]{13F55, 05C65, 05C75} 
\thanks{The first author is partially supported by Louisiana Board of Regents Grant LEQSF(2007-10)-RD-A-30 and Tulane Research Enhancement Fund. The third author acknowledges the financial support of CONACyT grant 49251-F and SNI}

\begin{abstract}
There is a one-to-one correspondence between square-free monomial ideals and clutters, which are also known as simple hypergraphs. In \cite{MRV} it was conjectured that unmixed admissible clutters were Cohen-Macaulay. We prove that the conjecture is true for uniform clutters of heights $2$ and $3$, i.e., if the smallest cardinality of a minimal vertex cover of the clutter is $2$ or $3$. For clutters of greater height, we give a family of counterexamples to show that the conjecture fails. For unmixed admissible uniform clutters of height $4$, we characterize when the Alexander dual of their edge ideals has linear quotients, and in particular, give an additional condition under which unmixed admissible uniform clutters are Cohen-Macaulay. 
\end{abstract}

\maketitle

\section{Introduction}

A {\bf clutter} consists of a finite set of points, called the {\bf vertices}, and a family of nonempty subsets of the vertices with no nontrivial containments, called the {\bf edges}. Clutters are also known as {\bf simple hypergraphs}. A basic example of a clutter is a simple graph in the classical sense. Throughout the paper, $\C$ will denote a clutter over the vertices $V(\C) = \{x_1, \dots, x_n\}$ and edges $E(\C)$.

Let $K$ be a field. By identifying the vertices $\{x_1, \dots, x_n\}$ with the variables of a polynomial ring $R = K[x_1, \dots, x_n]$, there is a natural one-to-one correspondence between the class of clutters over the vertices $\{x_1, \dots, x_n\}$ and the class of square-free monomial ideals in $R$. This correspondence is given by $\C \leftrightarrow I(\C)$, where $I(\C)$ is the ideal $\big\langle x^e = \prod_{x_i \in e} x_i ~\big|~ e \in E(\C) \big\rangle$ in $R$. The ideal $I(\C)$ is usually referred to as the {\bf edge ideal} of $\C$. Edge ideals of clutters can also be viewed as edge ideals of hypergraphs (cf. \cite{HVT}) or facet ideals of simplicial complexes (cf. \cite{Faridi}).

We say that $\C$ is a {\bf Cohen-Macaulay clutter} if $R/I(\C)$ is a Cohen-Macaulay ring. The goal of this paper is to determine classes of Cohen-Macaulay clutters. That is, we seek to describe classes of Cohen-Macaulay square-free monomial ideals.

A Cohen-Macaulay ring is always unmixed. Thus, we shall focus on clutters with this property. A subset $C$ of $V(\C)$ is called a {\bf vertex cover} of $\C$ if for every edge $e \in E(\C)$, we have $C \cap e \not= \emptyset$. A vertex cover for which no proper subset is also a vertex cover is called a {\bf minimal vertex cover}. Observe that
$$I(\C) = \bigcap_{\{x_{i_1}, \dots, x_{i_s}\} \text{ is a minimal vertex cover of } \C} (x_{i_1}, \dots, x_{i_s}).$$
From this, it follows that the smallest cardinality of a minimal vertex cover of $\C$, called the {\bf covering number}, is exactly the height, $\height I(\C)$, of $I(\C)$. By abuse of terminology, we shall also call $\height I(\C)$ the {\bf height} of $\C$. We say that $\C$ is {\bf unmixed} if all its minimal vertex covers have the same cardinality. Recall also that $\C$ is {\bf uniform} if all its edges have the same cardinality. A {\bf perfect matching} of $\C$ is a collection of pairwise disjoint edges of $\C$ whose union is exactly $V(\C)$. A perfect matching is said to be {\bf of K\"onig type} if it has $\height I(\C)$ edges.

Inspired by \cite{herzog-hibi}, where the Cohen-Macaulay property was studied for bipartite graphs with a perfect matching, the following notion of {\bf admissible} clutters was introduced in \cite{MRV}. 

\begin{definition} \label{def.admissible}
Let $\C$ be a clutter with $\height I(\C) = g$, and let $d$ be a positive integer. Suppose that there are two partitions $\{X^1, \dots, X^d\}$ and $\{e_1, \dots, e_g\}$ of $V(\C)$ such that $|e_i\cap X^j| \le 1$ for all $i,j$. 
\begin{enumerate}
\item Suppose $e$ is a subset of $V(\C)$ of size $k$ such that $|e\cap X^i|\leq 1$ for all $i$. Let $1 \leq i_1 < \dots < i_k \leq d$ be all the integers such that $e \cap X^{i_l} \not= \emptyset$. Let $j_1, \dots, j_k \in \{1, \dots, g\}$ be integers such that $e \cap X^{i_l} \in e_{j_l}$. We denote by $x^{i_l}_{j_l}$ the unique vertex of $e \cap X^{i_l} \cap e_{j_l}$. We say that $e$ is {\bf admissible} if $i_1=1, i_2=2, \dots, i_k=k$ and  $j_1 \leq \dots \leq j_k$. Such an admissible set can be represented as $e=x^1_{j_1} \cdots x^{k}_{j_k}$. 
\item A monomial $x^a$ is {\bf admissible} if ${\rm supp}(x^a)$ is admissible. 
\item We say that $\C$ is {\bf admissible} if $e_1, \dots, e_g \in E(\C)$, and all edges of $\C$ are admissible.
\end{enumerate}
We can think of $X^1, \dots, X^d$ as {\bf color classes} used to color the edges of $\C$. Note that under condition (3), $e_1, \ldots , e_g$ form a perfect matching of K\"{o}nig type in $\C$.
\end{definition}

In order to generalize results of \cite{herzog-hibi} to higher dimension, the following conjecture was stated in \cite{MRV}.   

\begin{conjecture}\label{conjecture} If $\C$ is an admissible clutter and $\C$ is unmixed, then $\C$ is Cohen-Macaulay.  
\end{conjecture}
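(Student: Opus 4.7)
The plan is to invoke the Eagon--Reiner theorem: $R/I(\C)$ is Cohen--Macaulay if and only if the Alexander dual $I(\C)^\vee$ has a linear resolution. Since $\C$ is unmixed of height $g$, $I(\C)^\vee$ has minimal generators $x^C := \prod_{x \in C} x$ of common degree $g$, indexed by the minimal vertex covers $C$ of $\C$. A standard route to linear resolution is to establish \emph{linear quotients}: find an ordering $C_1, C_2, \ldots, C_t$ of the minimal vertex covers such that for every $s \geq 2$, the colon ideal $(x^{C_1}, \ldots, x^{C_{s-1}}) : x^{C_s}$ is generated by variables. This route is attractive because admissibility is an inherently combinatorial ordering condition, so one has a canonical candidate order sitting inside the hypothesis.

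The admissible structure provides the data to set up such an ordering. Since $\{e_1, \ldots, e_g\}$ is a perfect matching of K\"onig type and $|C| = g$, every minimal vertex cover $C$ meets each $e_j$ in exactly one vertex, which we may write as $x^{i_j}_j$. This identifies $C$ with a tuple $(i_1, \ldots, i_g) \in \{1, \ldots, d\}^g$, and I would order the covers lexicographically on these tuples (or by a reverse-lex variant adapted to admissibility). The monotonicity built into admissible edges, namely that the color indices appearing in an edge are $1, 2, \ldots, k$ and that the matching-edge indices $j_1 \le \cdots \le j_k$ are nondecreasing, should give a coherent way to compare covers and to track single-vertex ``swaps'' from one cover to another.

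The heart of the argument is then to fix a cover $C_s$ and, for each predecessor $C_{s'}$, exhibit a single variable $y \in C_{s'} \setminus C_s$ and a vertex $z \in C_s \setminus C_{s'}$ such that $(C_{s'} \setminus \{y\}) \cup \{z\}$ is a vertex cover preceding $C_s$. This would show that every colon generator $x^{C_{s'} \setminus C_s}$ is divisible by a variable already appearing as such a generator, yielding the linear quotients property. The concrete test case is $g=2$, where each cover is a pair $(i_1, i_2)$ and the needed swap is almost forced by the lexicographic comparison, and $g=3$, where three coordinates still give enough room for a direct case analysis driven by the admissibility of each edge meeting $C_{s'}$.

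The main obstacle I anticipate is that admissibility is a condition on the \emph{edges} of $\C$, whereas minimal vertex covers interact with those edges only through the hitting property. Ensuring that a single-variable swap still meets every admissible edge requires, in effect, that the admissible structure propagates from edges to covers, and there is no obvious reason for this to hold when $g$ is large: for bigger $g$ one can imagine covers $C_{s'}$ and $C_s$ differing in several coordinates, each of which is individually blocked from swapping by a distinct admissible edge, so that the colon requires a product of two or more variables. This suggests the plan will go through for small heights but will demand an extra hypothesis — or genuinely fail — for $g \geq 4$, consistent with what the abstract announces.
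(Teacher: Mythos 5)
There is a fundamental problem you have half-noticed but not confronted: the statement you are asked to prove is false as stated, and this very paper disproves it. Theorem \ref{counterexamples} constructs, for every $g \ge 4$, a uniform, admissible, unmixed clutter of height $g$ that is not Cohen--Macaulay (the non-Cohen--Macaulayness is reduced via Lemma \ref{reduce-g} to a height-four piece checked by computer). So no strategy can yield a proof of Conjecture \ref{conjecture} in full generality; your closing paragraph, where you predict the single-variable swap may be blocked by several distinct admissible edges for large $g$, is exactly the phenomenon the paper formalizes as condition (*) in Section \ref{higherg}, and the counterexamples realize it. A correct treatment of this statement must either restrict to the heights where it holds ($g=2,3$, uniform) or exhibit a counterexample; a proof plan for the general statement cannot close.

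For the cases the paper does prove, your outline is the paper's approach in spirit (Eagon--Reiner, unmixedness giving generators of $I(\C)^\vee$ of common degree $g$, exponent vectors $(i_1,\dots,i_g)$, linear quotients via a swap producing an earlier cover), but the specific orderings you propose do not work. The paper's Example in Section \ref{g=3} shows that reverse lexicographic order fails to give linear quotients already for $g=d=3$, and plain lex fails for the symmetric reason: the two swap moves available from the admissible structure are ``lowering the front'' (Lemma \ref{LowerTheFront}) and ``raising the end'' (Lemma \ref{RaiseTheEnd}), and the order must make \emph{both} moves produce a cover that comes earlier. Lex gets the front right but sends a raised last coordinate later; rev-lex does the opposite. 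The paper therefore uses a hybrid order, increasing in the first coordinate and decreasing in the last (Theorems \ref{g=2quotients} and \ref{g=3quotients}), and for $g=3$ the case analysis hinges on the fact that two differing coordinates must include an end, which is precisely what breaks at $g=4$ and forces the extra hypothesis of Theorem \ref{thm.g=4}. Your proposal also leans on ``the needed swap is almost forced'' without verifying that the swapped set is still a vertex cover; that verification is exactly the content of Lemmas \ref{RaiseTheEnd} and \ref{LowerTheFront}, which depend on admissibility of the edges and on the precise hypotheses about the other coordinates of the cover, so it cannot be waved through.
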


Conjecture \ref{conjecture} is true for clutters with two color classes, i.e., when $d=2$ (see \cite{herzog-hibi}). It is also true if the admissible clutter $\C$ is uniform and {\it complete}, meaning that every maximal admissible set in $V(\C)$ is an edge of $\C$ (see \cite[Theorem 3.12]{MRV}). Note that if $\C$ is complete and admissible then $\C$ is automatically unmixed (see \cite[Theorem 3.6]{MRV}).

In this paper, we work on clutters with an arbitrary number of color classes. Our main results are as  follows. First, we give an affirmative answer to Conjecture \ref{conjecture} for uniform clutters when $\height I(\C) =2$ and when $\height I(\C) =3$ (Theorems \ref{g=2quotients} and \ref{g=3quotients}). Then we present a family of examples to show that Conjecture \ref{conjecture} may fail when $\height I(\C) \ge 4$, even in the uniform case (Theorem \ref{counterexamples}). When $\height I(\C) = 2$, we also show that if $I(\C)$ is {\bf normally torsion-free}, meaning all symbolic powers of $I(\C)$ are the same as the usual powers, then the converse statement of Conjecture \ref{conjecture} is true (Theorem \ref{g=2converse}). That is, if $\C$ is uniform and $I(\C)$ is Cohen-Macaulay, normally torsion-free, and of height two, then $\C$ is unmixed and has a perfect matching of K\"onig type and a partition for which it is admissible. Furthermore, when $\height I(\C) = 4$, we give an additional condition under which admissible unmixed clutters will be Cohen-Macaulay (Theorem \ref{thm.g=4}).

Our tool in examining Cohen-Macaulay clutters is the theory of Alexander duality. The {\bf Alexander dual} of the square-free monomial ideal $I(\C)$ is defined to be
\begin{align*}
I(\C)^\vee & = \bigcap_{x_{j_1} \cdots x_{j_r} \text{ is a minimal generator of } I(\C)} (x_{j_1}, \dots, x_{j_r}) \\
& = \big\langle x_{i_1} \cdots x_{i_s} ~\big|~ \{x_{i_1}, \dots, x_{i_s}\} \text{ is a minimal vertex cover of } \C \big\rangle.
\end{align*}
The Alexander dual of $I(\C)$ is also a square-free monomial ideal. We shall define the {\bf Alexander dual} of $\C$ to be the clutter corresponding to $I(\C)^\vee$, denoted by $\C^\vee$. Our method is based on the following theorem of Eagon and Reiner (see \cite{ER}).

\begin{theorem} \label{thm.er}
Let $I$ be a square-free monomial ideal in $R = K[x_1, \dots, x_n]$ with the Alexander dual $I^\vee$. Then $R/I$ is a Cohen-Macaulay ring over $K$ if and only if $I^\vee$ has a linear resolution over $R$.
\end{theorem}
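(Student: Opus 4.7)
The plan is to translate the statement into the language of Stanley--Reisner complexes and combine three standard ingredients. Write $I = I_\Delta$ for the Stanley--Reisner ideal of the simplicial complex $\Delta = \{\sigma \subseteq V : \prod_{x \in \sigma} x \notin I\}$ on $V = \{x_1,\dots,x_n\}$; a direct check gives $I^\vee = I_{\Delta^\vee}$, where $\Delta^\vee = \{V \setminus \sigma : \sigma \notin \Delta\}$ is the combinatorial Alexander dual. The equivalence to be proved then reads: $R/I_\Delta$ is Cohen--Macaulay over $K$ if and only if the minimal free resolution of $I_{\Delta^\vee}$ is linear.

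The three tools I would deploy are: (i) Reisner's criterion, which states that $R/I_\Delta$ is Cohen--Macaulay over $K$ iff $\tilde{H}_i(\operatorname{lk}_\Delta \sigma; K) = 0$ for every face $\sigma \in \Delta$ and every $i < \dim \operatorname{lk}_\Delta \sigma$; (ii) Hochster's formula,
$$\beta_{i,j}(R/I_\Gamma) \;=\; \sum_{W \subseteq V,\ |W|=j} \dim_K \tilde{H}_{j-i-1}(\Gamma_W; K),$$
valid for any simplicial complex $\Gamma$ on $V$; and (iii) combinatorial Alexander duality. For the last, the key observation is that $(\Delta^\vee)_W = \bigl(\operatorname{lk}_\Delta(V \setminus W)\bigr)^\vee$ as complexes on the vertex set $W$, so the classical Alexander duality on the $(|W|-2)$-sphere produces a natural isomorphism $\tilde{H}_k((\Delta^\vee)_W; K) \cong \tilde{H}^{|W|-k-3}(\operatorname{lk}_\Delta(V \setminus W); K)$.

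With these in hand, the proof proceeds as follows. The minimal generators of $I^\vee$ correspond to the minimal vertex covers of $\C$, i.e.\ to the complements of the facets of $\Delta$, so $I^\vee$ is generated in a single degree $d$ precisely when $\Delta$ is pure of dimension $n-d-1$; under this hypothesis (necessary for both sides), linearity of the resolution of $I^\vee$ is equivalent to the vanishings $\beta_{i,j}(R/I_{\Delta^\vee}) = 0$ for all $i \ge 1$ with $j-i \neq d-1$. Feeding this into Hochster's formula for $\Delta^\vee$, then applying the duality in (iii) with the substitution $\sigma = V \setminus W$ and identifying cohomology with homology via the universal coefficient theorem (since $K$ is a field), rewrites the condition as $\tilde{H}_k(\operatorname{lk}_\Delta \sigma; K) = 0$ for every $\sigma \in \Delta$ and every $k < \dim \operatorname{lk}_\Delta \sigma$, which is exactly Reisner's criterion. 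Every step in this chain is reversible, so both directions of the theorem emerge at once. The main obstacle is the careful bookkeeping of the dimension shifts arising from Alexander duality and from the conversion between Betti numbers of $R/I^\vee$ and those of $I^\vee$; once the indices line up, the Eagon--Reiner equivalence is essentially forced by Hochster's formula.
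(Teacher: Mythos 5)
This statement is quoted in the paper as the Eagon--Reiner theorem and is cited to \cite{ER} without proof, so there is no in-paper argument to compare against; what you have written is, in outline, the standard (essentially the original) proof of that theorem, and it is correct. Your key combinatorial identity checks out: writing $\sigma = V\setminus W$, a face $F\subseteq W$ of $\Delta^\vee$ is of the form $W\setminus\tau$ with $\sigma\cup\tau\notin\Delta$, which is exactly the description of $\bigl(\operatorname{lk}_\Delta\sigma\bigr)^\vee$ inside $W$; and the duality shift $\tilde{H}_k((\Delta^\vee)_W;K)\cong \tilde{H}^{|W|-k-3}(\operatorname{lk}_\Delta\sigma;K)$ together with Hochster's formula for $R/I_{\Delta^\vee}$ does convert the linearity condition $\beta_{i,j}=0$ for $j-i\neq d-1$, $i\geq 1$, into vanishing of $\tilde{H}_k(\operatorname{lk}_\Delta\sigma;K)$ for $k<\dim\operatorname{lk}_\Delta\sigma$, i.e.\ Reisner's criterion. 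To make the sketch airtight you would still need to carry out the bookkeeping you defer: (a) purity must be extracted from each hypothesis before the dimension count $\dim\operatorname{lk}_\Delta\sigma=n-d-1-|\sigma|$ is legitimate (Cohen--Macaulayness gives unmixedness, while a linear resolution forces generation of $I^\vee$ in one degree, hence purity of $\Delta$); (b) the Hochster sum ranges over all $W\subseteq V$, so you must dispose of the terms with $V\setminus W\notin\Delta$ --- there $(\Delta^\vee)_W$ is the full simplex on $W$ and the link is void, so both sides contribute nothing; and (c) the identification of reduced cohomology with homology over the field $K$, as you note. With those details written out, the argument is complete and is exactly the Hochster--Reisner--Alexander-duality route of Eagon and Reiner.
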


Theorem \ref{thm.er} allows us to study the Cohen-Macaulayness of $I(\C)$ by investigating when its Alexander dual $I(\C)^\vee$ has a linear resolution. Proving that a class of ideals has linear resolutions is difficult in general. To do this, we shall employ techniques from Herzog and Takayama's theory of {\bf linear quotients} (see \cite{HT}).

\begin{definition} \label{def.linearquotients}
Let $I$ be a monomial ideal in $R = K[x_1, \dots, x_n]$. The ideal $I$ is said to have {\it linear quotients} if $I$ has a system of minimal generators $\{u_1, \dots, u_r\}$ with $\deg u_1 \le \dots \le \deg u_r$ such that for all $1 \le i \le r-1$, $((u_1, \dots u_i) : u_{i+1})$ is generated by linear forms.
\end{definition}

It can be seen that if a monomial ideal $I$ is generated in a single degree and $I$ has linear quotients, then $I$ has a linear resolution (cf. \cite[Lemma 5.2]{Faridi}). Thus, if $\C$ is unmixed then to show that $\C$ is Cohen-Macaulay, it suffices to show that $I(\C)^\vee$ has linear quotients.

The paper is outlined as follows. In the next section, we prove some auxiliary results about the generators of the Alexander dual $I(\C)^\vee$ of $I(\C)$. Section ~\ref{g=2} is devoted to the case when $\height I(\C) = 2$. Section ~\ref{g=3} deals with the case when $\height I(\C) = 3$. In Section ~\ref{higherg}, we give a family of counterexamples to Conjecture \ref{conjecture} when $\height I(\C) \ge 4$ and give a criterion for the Cohen-Macaulayness of $I(\C)$ when it has height four.


\section{Generators of the Alexander Dual} \label{lemmas} 

Throughout this section $\C$ will denote a uniform admissible clutter. We shall prove a number of auxiliary results about the Alexander dual $I(\C)^\vee$ of the edge ideal $I(\C)$ of $\C$. Recall that the generators of $I(\C)^\vee$ are identified with the minimal vertex covers of $\C$. Results in this section allow us to recognize which subsets of the vertices are vertex covers of $\C$. More precisely, these results allow us to produce additional minimal vertex covers from known ones. 

From Definition \ref{def.admissible}, $\C$ admits a perfect matching $\{e_1, \dots, e_g\}$, where $g$ denotes the height of $I(\C)$, and has a partition of the vertices $\{X^1, \dots, X^d\}$ (color classes). Since $\C$ is uniform and admissible, it can be seen that $|e_j| = d$ for all $j$, and $|e_j \cap X^i| = 1$ for all $i,j$ (cf. \cite{MRV}). As before, we use $x^i_j$ to denote the unique vertex in $e_j \cap X^i$. Also, we sometimes refer to the index $i$ in the vertex $x^i_j$ as its {\bf exponent}. Throughout the paper, we will be dealing with square-free monomials, so this notion of exponent will not cause any confusion with the {\it exponent} that refers to powers in a monomial (since the latter is always 1). 

\begin{lemma}\label{RaiseTheEnd}
Suppose $C$ is a minimal vertex cover of size $g$ of $\C$. If $C \cap e_i = \{x^t_i\}$ and $C \cap e_j = \{x^l_j\}$ for all $g \geq j >i$ for some fixed $l$,  $d \geq l>t$, then $C'=C\setminus \{x_i^t\} \cup \{x_i^{t+1}\}$ is also a minimal vertex cover of $\C$.
\end{lemma}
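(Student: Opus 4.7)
The plan is to prove that $C'$ is a vertex cover; since $|C'| = |C| = g = \height I(\C)$, any vertex cover of size $g$ is a minimum, and thus minimal, vertex cover. Also, because $C$ has size $g$ and must hit each matching edge $e_j$ (as each $e_j \in E(\C)$), a pigeonhole argument forces $|C \cap e_j|=1$ for every $j$, so the hypotheses completely describe $C \cap e_j$ for $j \ge i$. Only edges of $\C$ that contain $x^t_i$ but not $x^{t+1}_i$ are potentially uncovered after swapping, so the whole proof reduces to exhibiting, for each such edge, another element of $C$ that lies in it and is different from $x^t_i$.

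So let $e \in E(\C)$ contain $x^t_i$ but not $x^{t+1}_i$. Since $\C$ is uniform and admissible, $e = x^1_{j_1} x^2_{j_2} \cdots x^d_{j_d}$ is a transversal of the color classes with $j_1 \le j_2 \le \cdots \le j_d$, and the hypothesis $x^t_i \in e$ says $j_t = i$. The absence of $x^{t+1}_i$ then forces $j_{t+1} \ne i$, and combined with $j_{t+1} \ge j_t = i$ we conclude $j_{t+1} > i$.

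Now I would invoke the second half of the hypothesis on $C$: because $l > t$ we have $l \ge t+1$, hence $j_l \ge j_{t+1} > i$. By the assumption that $C \cap e_j = \{x^l_j\}$ for every $j > i$, we get $x^l_{j_l} \in C$. This vertex lies in $e$ (it is literally the $l$-th entry of the admissible representation of $e$) and is distinct from $x^t_i$ because it has exponent $l \ne t$. Therefore $x^l_{j_l} \in e \cap (C \setminus \{x^t_i\}) \subseteq e \cap C'$, showing that $e$ is covered by $C'$.

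Edges not containing $x^t_i$ already intersect $C$ in something other than $x^t_i$, hence intersect $C'$, which completes the verification that $C'$ is a vertex cover. The main obstacle is really the bookkeeping in the case $x^t_i \in e$, $x^{t+1}_i \notin e$: one must see that the strict inequality $l > t$ together with the monotonicity $j_1 \le \cdots \le j_d$ of the admissible form forces $j_l > i$, so that the hypothesis on $C$ beyond index $i$ produces a replacement vertex. No further arguments for minimality are needed, since $|C'| = g = \height I(\C)$.
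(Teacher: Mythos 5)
Your proof is correct and follows essentially the same route as the paper: reduce to showing $C'$ is a cover (minimality being automatic from $|C'|=g=\height I(\C)$), and for an edge containing $x_i^t$ but not $x_i^{t+1}$, use admissibility to push from the $X^{t+1}$-entry (index $>i$) to the $X^l$-entry (index $\geq$ that, hence $>i$), which the hypothesis places in $C\setminus\{x_i^t\}\subseteq C'$. The only cosmetic difference is that the paper cases on whether the $X^{t+1}$-vertex of $e$ equals $x_i^{t+1}$, while you case on whether $x_i^{t+1}\in e$; the underlying argument is identical.
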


\begin{proof}
Note that $|C'| = |C| = g = \height I(\C)$, so if $C'$ is a cover, then it is necessarily minimal. Let $e$ be an arbitrary edge of $\C$. Then $e \cap C\not= \emptyset$. If $e \cap C$ contains any element other than $x_i^t$, then $e \cap C' \not= \emptyset$ and we are done. Thus we may assume $e \cap C = \{x^t_i\}$. Now consider $e \cap X^{t+1}= \{x^{t+1}_j\}$ for some $j\geq i$. If $j=i$, then $x^{t+1}_i \in C'$ and $C'$ covers $e$. If $j>i$, then since $l \geq t+1$ we have $e \cap X^l = \{x^l_k\}$ for some $k\geq j$. But then $x_k^l \in C'$ and $C'$ covers $e$. 
Thus $C'$ covers $e$ for any edge $e$ of $\mathcal C$ as desired.
\end{proof}

\begin{remark} In Lemma \ref{RaiseTheEnd}, if $i=g$ then the condition, in fact, is: $C \cap e_g = \{x^t_g\}$ where $t < d$. Thus, for any given minimal vertex cover of size $g$, we can create a family of minimal vertex covers of size $g$ by {\it raising the last term}. 
\end{remark}

Note that there is a symmetry to the definition of an admissible clutter. Using this symmetry, we can prove that the exponents for early terms can be reduced in a way that is symmetric to the method given in the preceding argument.

\begin{lemma}\label{LowerTheFront}
Suppose $C$ is a minimal vertex cover of size $g$ of $\C$. If $C\cap e_i = \{x^t_i\}$ and $C\cap e_j = \{x^l_j\}$ for all $1 \leq j < i$ and some fixed $l$,  $1 \leq l<t$, (or $1=i$ and $1<t$) then $C' = C \setminus \{x_i^t\} \cup \{x_i^{t-1}\}$ is also a minimal vertex cover of $\C$.
\end{lemma}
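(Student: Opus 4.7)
The plan is to mirror the proof of Lemma \ref{RaiseTheEnd}, exploiting the left-right symmetry built into the definition of an admissible clutter. Since $|C'| = |C| = g = \height I(\C)$, minimality is automatic once $C'$ is shown to be a vertex cover, so I only need to verify $C' \cap e \neq \emptyset$ for an arbitrary edge $e \in E(\C)$. As in the previous lemma, if $e \cap C$ contains any vertex other than $x_i^t$, then $C \setminus \{x_i^t\} \subset C'$ already covers $e$, so the only interesting case is $e \cap C = \{x_i^t\}$.

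For that case, I would write the admissible edge $e$ in its canonical form $e = x^1_{j_1} \cdots x^k_{j_k}$ with $j_1 \le \cdots \le j_k$. Since $x^t_i \in e$, we have $t \le k$ and $j_t = i$. Because $l \ge 1$ and $t > l$, the index $t-1$ satisfies $1 \le t-1 < k$, so the vertex $x^{t-1}_{j_{t-1}}$ really does appear in $e$. The key dichotomy is the value of $j_{t-1}$. If $j_{t-1} = i$, then $x^{t-1}_i \in e$ and $x^{t-1}_i \in C'$ by construction, so $C'$ covers $e$. If $j_{t-1} < i$, then monotonicity of the $j$'s gives $j_l \le j_{t-1} < i$, so the hypothesis $C \cap e_{j_l} = \{x^l_{j_l}\}$ applies; hence $x^l_{j_l} \in e \cap C$, and since $l \neq t$ this contradicts the assumption $e \cap C = \{x^t_i\}$.

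The boundary case $i = 1$ (where the hypothesis on earlier $e_j$'s is vacuous) is even easier and I would handle it separately: here $j_t = 1$, and monotonicity forces $j_1 = \cdots = j_t = 1$, so $x^{t-1}_1 \in e$ and $x^{t-1}_1 \in C'$ covers $e$ directly.

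The only real obstacle is bookkeeping: one must carefully verify that $x^{t-1}_{j_{t-1}}$ is genuinely a vertex of $e$ (using $t > l \ge 1$ to get $t-1 \ge 1$, and $x^t_i \in e$ to get $t \le k$), and one must apply the hypothesis $C \cap e_j = \{x^l_j\}$ only when $j < i$, which is exactly what the second case provides. No new combinatorial idea is needed beyond the symmetry already foreshadowed in the remark preceding the statement.
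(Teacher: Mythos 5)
Your proposal is correct and follows essentially the same argument as the paper: reduce to the case $e \cap C = \{x^t_i\}$, look at $e \cap X^{t-1} = \{x^{t-1}_{j_{t-1}}\}$ with $j_{t-1} \le i$ by admissibility, and split on whether $j_{t-1} = i$ (then $x^{t-1}_i \in C'$ covers $e$) or $j_{t-1} < i$ (then the vertex of $e$ in $X^l$ lies in some $e_s$ with $s < i$, hence in $C$). The only cosmetic difference is that in the second case the paper observes this vertex lies in $C'$ and covers $e$, while you phrase it as a contradiction with $e \cap C = \{x^t_i\}$; the two are equivalent.
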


\begin{proof}
Let $e$ be an arbitrary edge of $\C$. As before, we need only show that $C'$ covers $e$. We may assume $e \cap C = \{x^t_i \}$. Consider $e \cap X^{t-1} = \{x_r^{t-1}\}$ for some $r\leq i$. If $r=i$, then $C'$ covers $e$. If $r <i$, consider $e \cap X^l = \{ x_s^l \}$ for some $s\leq r<i$. But then $x_s^l \in C'$ and so $C'$ covers $e$.
\end{proof}

\begin{remark} Lemma \ref{LowerTheFront} allows us to obtain a family of minimal vertex covers from a given one by {\it lowering the front term}.
\end{remark}

The final lemma of this section gives a method that can sometimes be used to alter a middle term of a vertex cover. It shows that if two vertex covers are identical except for their intersection with a fixed $e_i$ from the perfect matching of K\"{o}nig type, then one can form a family of minimal vertex covers, differing only in their intersections with the fixed $e_i$, that in some sense connects the two covers along $e_i$. 

\begin{lemma} \label{Consecutive}
Let $\C$ be an admissible clutter. Let $i$ and $c < c'$ be positive integers, and suppose that $C$ is a subset of the vertices of $\C$ such that both $C \cup \{x_i^c\}$ and $C \cup \{x_i^{c'}\}$ are vertex covers of $\C$. Then $C \cup \{x_i^l\}$ is a vertex cover of $\C$ for all $c \le l \le c'$.
\end{lemma}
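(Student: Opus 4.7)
The plan is to argue by contradiction. Suppose there is some $l$ with $c < l < c'$ such that $C \cup \{x_i^l\}$ fails to be a vertex cover, so there exists an edge $e \in E(\C)$ with $e \cap C = \emptyset$ and $x_i^l \notin e$. Because $C \cup \{x_i^c\}$ and $C \cup \{x_i^{c'}\}$ are vertex covers by hypothesis and $C$ itself misses $e$, the edge $e$ must contain both $x_i^c$ and $x_i^{c'}$.

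The key step is to invoke the structure of admissible edges from Definition \ref{def.admissible}. Every edge of $\C$ can be written as $e = x_{j_1}^1 x_{j_2}^2 \cdots x_{j_k}^k$, where the exponents are exactly $1, 2, \dots, k$ in consecutive order and the subscripts satisfy $j_1 \leq j_2 \leq \cdots \leq j_k$. From $x_i^c \in e$ we obtain $c \leq k$ and $j_c = i$, and likewise $c' \leq k$ and $j_{c'} = i$. The monotonicity of the sequence $(j_t)$ then forces $j_t = i$ for every $t$ with $c \leq t \leq c'$. In particular $x_i^l = x_{j_l}^l \in e$, which contradicts $x_i^l \notin e$.

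I do not foresee any substantive obstacle; the argument is essentially a one-line exploitation of the two defining properties of an admissible edge—that its exponents form an initial segment $1, 2, \dots, k$ and that its subscripts are nondecreasing. The only mild care needed is to verify that $c'$ is at most the size of $e$, but this is automatic from $x_i^{c'} \in e$ together with the consecutive-exponent condition. Note that neither the uniformity of $\C$ nor the perfect matching $\{e_1, \dots, e_g\}$ enters the proof: admissibility of each individual edge suffices.
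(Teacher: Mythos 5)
Your proof is correct and follows essentially the same route as the paper: any edge missing $C$ must contain both $x_i^c$ and $x_i^{c'}$, and admissibility (nondecreasing subscripts over the consecutive color classes $X^c,\dots,X^{c'}$) forces it to contain $x_i^l$ as well. The contradiction framing and the explicit unpacking of the monotonicity of the subscripts are only cosmetic differences from the paper's direct argument.
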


\begin{proof} It follows from the hypothesis that any edge $e$ of $\C$ that avoids the vertices in $C$ must contain both $x_i^c$ and $x_i^{c'}$. That is, $e \cap X^c = \{x_i^c\}$ and $e \cap X^{c'} =\{x_i^{c'} \}$. 
Since $\C$ is admissible, this implies that $e \cap X^l = \{x_i^l \}$  for any $c \le l \le c'$. Hence, any edge $e$ of $\C$ that avoids the vertices in $C$ must contain $x_i^l$ for all $c \le l \le c'$. This proves the lemma.
\end{proof}





To conclude this section, we observe that if $C$ is a minimal vertex cover of $\C$ of size $g$, then we must have $|C \cap e_j| = 1$ for all $j$. Thus, the minimal generator $x^C$ of the Alexander dual $I(\C)^\vee$ can be written as $x^C = x_1^{i_1}x_2^{i_2} \dots x_g^{i_g}$ for $1 \leq i_j \leq d$. We, therefore, can work with the {\bf exponent vector} $(i_1, i_2, \dots ,i_g)$ when discussing $x^C$.


\section{Cohen-Macaulay Clutters of Height Two} \label{g=2}

This section is devoted to investigating the situation of uniform clutters with $g = \height I(\C) = 2$. We prove Conjecture \ref{conjecture} in this case. We also show that the converse statement of Conjecture \ref{conjecture} is true when the ideal is normally torsion-free. That is, if $\C$ is uniform and its edge ideal is normally torsion-free and Cohen-Macaulay of height two, then $\C$ is unmixed and has a perfect matching of K\"onig type and a partition (color classes) for which all edges of $\C$ are admissible.

As mentioned in the Introduction, to show that $\C$ is Cohen-Macaulay, it suffices to give an ordering of the minimal generators of $I(\C)^\vee$ so that it admits linear quotients. In the case when $g = 2$, our method is as follows. We first describe an ordering on the set of all tuples $S = \{ (a,b) ~|~ 1 \le a,b \le d\}$. This induces an ordering on the exponent vectors of the minimal generators of $I(\C)^\vee$. The minimal generators of $I(\C)^\vee$ are then labeled by the increasing order of their exponents. To show that under this ordering $I(\C)^\vee$ admits linear quotients, we show that if $V$ and $W$ are minimal generators of $I(\C)^\vee$ with $V$ labeled before $W$ and $(V:W)$ is not generated by a linear form (in this case, it means $V/gcd(V,W)$ is a monomial of degree greater than 1) then there exists a generator $U$ of $I(\C)^\vee$ labeled before $W$ such that $U/\gcd(U,W)$ is a linear factor of $V/\gcd(V,W)$.

\begin{theorem}\label{g=2quotients}
Let $\C$ be a uniform admissible unmixed clutter such that $g = \height I(\C) = 2$. Then the Alexander dual $I(\C)^\vee$ of $I(\C)$ has linear quotients, and so $\C$ is a Cohen-Macaulay clutter.
\end{theorem}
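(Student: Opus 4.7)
The plan is to choose a lexicographic ordering on the exponent vectors of the minimal generators of $I(\C)^\vee$, and then verify the linear-quotient condition using the auxiliary results of Section \ref{lemmas}. Because $\C$ is unmixed of height $g = 2$, every minimal vertex cover consists of exactly two vertices, one from each of $e_1, e_2$, so every minimal generator of $I(\C)^\vee$ has the form $x_1^a x_2^b$ with exponent vector $(a,b) \in \{1,\dots,d\}^2$. Declare $x_1^a x_2^b$ to precede $x_1^{a'} x_2^{b'}$ when $(a,b) <_{\text{lex}} (a',b')$.

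Given two such generators $V = x_1^a x_2^b$ preceding $W = x_1^{a'} x_2^{b'}$, following the strategy described in the paragraph before the theorem, I only need to handle the case where $V/\gcd(V,W)$ has degree $2$, i.e., where $V$ and $W$ share no variable. In that situation $a \neq a'$ and $b \neq b'$, and combined with lex order this forces $a < a'$. The natural candidate for the required auxiliary cover is $U = \{x_1^a, x_2^{b'}\}$, and I would verify that $U$ is a minimal vertex cover, that $U$ precedes $W$, and that $U/\gcd(U,W)$ is a linear factor of $V/\gcd(V,W)$.

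The key step is to show $U$ is a vertex cover. Here I would start from the known minimal cover $W$ and apply Lemma \ref{LowerTheFront} repeatedly with $i = 1$. According to the remark following that lemma, when $i = 1$ the only hypothesis is that the current exponent exceeds $1$, so the first coordinate can be lowered from $a'$ one step at a time, producing a chain of minimal vertex covers of size $g$ ending at $U = \{x_1^a, x_2^{b'}\}$. Since $a < a'$, $U$ precedes $W$ in the lex order, and $U/\gcd(U,W) = x_1^a$ divides $V/\gcd(V,W) = x_1^a x_2^b$, completing the verification.

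With this in hand, $I(\C)^\vee$ has linear quotients; since $\C$ is unmixed of height $2$, $I(\C)^\vee$ is generated in a single degree, so linear quotients force a linear resolution. Cohen-Macaulayness of $\C$ then follows from the Eagon--Reiner theorem (Theorem \ref{thm.er}). The main obstacle in the argument is precisely the construction of the auxiliary cover $U$, which is the job the lemmas of Section \ref{lemmas} are designed to do; everything else is bookkeeping about the lex order.
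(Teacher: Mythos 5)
Your proposal is correct and follows essentially the same route as the paper: the key step in both is to take the later generator $W = x_1^{a'}x_2^{b'}$ and apply Lemma \ref{LowerTheFront} (with $i=1$, where the only hypothesis is that the exponent exceeds $1$) repeatedly to produce the auxiliary cover $U = \{x_1^a, x_2^{b'}\}$, which precedes $W$ and yields the required linear factor. The only difference is the tie-breaking in the ordering (pure lex versus the paper's choice of decreasing second coordinate), which is immaterial for $g=2$ since the argument only uses that the order is governed by the first coordinate; the paper itself remarks that several orderings work and that its particular choice is made with the extension to $g=3$ in mind.
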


\begin{proof} Since $\C$ is unmixed, it will be Cohen-Macaulay if $I(\C)^\vee$ has linear quotients, as noted in the Introduction. To prove $I(\C)^\vee$ has linear quotients, we order the elements of $S$ by $(1,d) < (1,d-1) < \dots < (1,1) < (2,d) < (2,d-1) \dots < (2,1) < (3,d) < \dots < (d,d) < \dots < (d,1)$, and as mentioned before, label the minimal generators of $I(\C)^\vee$ according to the increasing order of their exponent vectors induced by the ordering on $S$. Assume that the minimal generators of $I(\C)^\vee$ are labeled as $u_1, \dots, u_s$. 

Now suppose that for some $j < i$, $\deg u_j/\gcd(u_j,u_i) \ge 2$. Write $u_j = x^{j_1}_1x^{j_2}_2$ and $u_i = x^{i_1}_1x^{i_2}_2$. Then by the chosen ordering, we have $j_1 < i_1$. By applying Lemma \ref{LowerTheFront} to $u_i$, we get a minimal generator $u_k = x_1^{j_1}x_2^{i_2}$ of $I(\C)^\vee$. It follows from the chosen ordering that $k < i$. Moreover, $u_k/\gcd(u_k,u_i) = x_1^{j_1}$, which divides $u_j/\gcd(u_j,u_i)$. Hence, under this labeling of the generators, $I(\C)^\vee$ has linear quotients. 
\end{proof}

\begin{remark} In the proof of Theorem \ref{g=2quotients}, we can apply Lemma \ref{RaiseTheEnd} to $u_j$ instead of using Lemma \ref{LowerTheFront}. There are other orderings of $S$ that also give rise to linear quotients in $I(\C)^\vee$. For example, it is easy to check that reverse lexicographical ordering in $S$ yields linear quotients in $I(\C)^\vee$. In passing to $g\geq 3$, it, however, becomes important to be able to use both Lemmas \ref{RaiseTheEnd} and \ref{LowerTheFront}. The ordering chosen in the proof above is designed to allow us to use both Lemma \ref{RaiseTheEnd} and Lemma \ref{LowerTheFront} to obtain generators of $I(\C)^\vee$ labeled before $u_i$.
\end{remark}

\begin{remark} It follows from \cite[Theorem 3.2]{HHZ} that, under the hypotheses of Theorem \ref{g=2quotients}, all powers of $I(\C)^\vee$ have linear resolutions.
\end{remark}

We now prove the converse of Theorem \ref{g=2quotients} under the additional assumption that $I(\C)$ is normally torsion free. Observe that if $\C$ is an unmixed clutter of height 2 then the Alexander dual $\C^\vee$ is a graph in the classical sense. Before proving the theorem, we shall recall the notions of a {\bf chordal} graph and of a {\bf free vertex} in a clutter.

\begin{definition} \label{def.chordal}
A graph $G$ is called a {\bf chordal} graph if every cycle of length at least 4 in $G$ has a {\bf chord}, that is, an edge joining two nonadjacent vertices of the cycle.
\end{definition}

\begin{remark} \label{dirac}
An alternative characterization of chordal graphs, due to Dirac \cite{dirac}, can be found in \cite{PTW}. It states that a graph $G$ is chordal if and only if every induced subgraph $H$ of $G$ contains a vertex $z$ such that the induced subgraph of $H$ on $N_H(z)$, the set of vertices adjacent to $z$ in $H$, is a complete subgraph of $H$. A vertex in $G$ with this property is called a {\bf simplicial vertex}.
\end{remark}

\begin{definition} \label{def.freevertex}
Let $\C$ be a clutter. Then a vertex $x \in V(\C)$ is called a {\bf free vertex} of $\C$ if $x$ belongs to exactly one edge of $\C$.
\end{definition}

\begin{theorem} \label{g=2converse}
Let $\C$ be a $d$-uniform clutter. Assume that $I(\C)$ is
normally torsion free and of height two. Then $\C$ is Cohen-Macaulay if and only if 
\begin{itemize}
\item[\rm (i)] $\C$ is unmixed, and
\item[\rm (ii)] there is a partition
$X^1=\{x_1^1,x_2^1\},\ldots,X^d=\{x_1^d,x_2^d\}$ 
of $V(\C)$ and a perfect matching $e_1=\{x_1^1,\ldots,x_1^d\}$, 
$e_2=\{x_2^1,\ldots,x_2^d\}$ of $\C$ such that all edges of
$\C$ have the form $\{x_{i_1}^1,\ldots,x_{i_d}^d\}$ for some
$1\leq i_1\leq\cdots\leq i_d\leq 2$. 
\end{itemize}
\end{theorem}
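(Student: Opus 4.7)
The backward implication is immediate: under hypotheses (i) and (ii), $\C$ is a $d$-uniform admissible unmixed clutter of height two, and Theorem \ref{g=2quotients} gives that $\C$ is Cohen-Macaulay.

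For the forward implication, assume $\C$ is Cohen-Macaulay. Then (i) holds because Cohen-Macaulay rings are unmixed, and my plan for (ii) begins by extracting a K\"onig perfect matching. Since $I(\C)$ is normally torsion-free, $\C$ has the K\"onig property, so the maximum matching number equals $\height I(\C) = 2$; pick disjoint edges $e_1, e_2 \in E(\C)$. To show $V(\C) = e_1 \cup e_2$, I plan to invoke the general fact that every non-isolated vertex of a clutter lies in some minimal vertex cover: if $v \in e \in E(\C)$, the clutter axiom forbids any generator of $I(\C)$ from dividing $e/v$, so $e/v \notin I(\C)$; this makes $v$ a zero-divisor on $R/I(\C)$ and hence a member of some associated prime, that is, of some MVC. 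Consequently, if $v \in V(\C) \setminus (e_1 \cup e_2)$ existed, (i) would yield an MVC $\{v, w\}$ of size two, forcing $w \in e_1 \cap e_2 = \emptyset$, which is impossible. Thus $\{e_1, e_2\}$ is a perfect matching of K\"onig type.

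By Eagon--Reiner (Theorem \ref{thm.er}), $I(\C)^\vee$ has a linear resolution and, being generated in degree two, is the edge ideal of the graph $\C^\vee$; Fr\"oberg's theorem (Remark \ref{dirac}) then gives that $\overline{\C^\vee}$ is chordal. Every size-two MVC meets both $e_1$ and $e_2$, so $\C^\vee$ is bipartite with bipartition $e_1 \sqcup e_2$; and a bipartite graph with chordal complement has no induced $2K_2$, equivalently the neighborhoods of the vertices in $e_1$ form a chain under inclusion. Label $e_1$ as $x_1^1, \dots, x_1^d$ in weakly decreasing order of $N_{\C^\vee}$-neighborhood and $e_2$ as $x_2^1, \dots, x_2^d$ in weakly increasing order, so that $N_{\C^\vee}(x_1^i) = \{x_2^{\alpha(i)}, \dots, x_2^d\}$ for some weakly increasing $\alpha\colon [d] \to [d]$, and put $X^i = \{x_1^i, x_2^i\}$. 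The staircase inequality $\alpha(i) \leq i$ will follow from $d$-uniformity: edges of $\C$ are precisely the minimal vertex covers of $\C^\vee$, so $d$-uniformity forces the minimum vertex cover of $\C^\vee$ to have size $d$; K\"onig's theorem for the bipartite graph $\C^\vee$ then yields a perfect matching of $\C^\vee$, and Hall's theorem applied to $S = \{x_1^{i_0}, \dots, x_1^d\}$ gives $\alpha(i_0) \leq i_0$. Hence each $X^i$ is itself an MVC of $\C$.

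Finally, for $f \in E(\C)$, the MVC property of every $X^j$ gives $|f \cap X^j| \geq 1$, and $|f| = d = $ (number of color classes) then forces $|f \cap X^j| = 1$ for each $j$; write $f = \{x_{i_1}^1, \dots, x_{i_d}^d\}$ with each $i_j \in \{1, 2\}$. If the sequence $(i_1, \dots, i_d)$ had an inversion $i_a = 2, i_b = 1$ with $a < b$, then the MVC $\{x_1^a, x_2^b\}$, which lies in $\C^\vee$ because $\alpha(a) \leq a \leq b$, would miss $f$ entirely --- a contradiction. Thus every edge of $\C$ is admissible with respect to this partition. The principal obstacle will be the staircase step $\alpha(i) \leq i$: on its own, the chain-bigraph structure permits plateaus in $\alpha$ under which some $\{x_1^i, x_2^i\}$ fails to be an MVC, and it is only after converting the $d$-uniformity of $\C$ into a perfect-matching condition on $\C^\vee$ (via K\"onig and Hall) that these plateaus are excluded and the whole labeling scheme is secured.
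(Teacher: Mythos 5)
Your proposal is correct, and its forward direction follows a genuinely different route from the paper's. The paper proves (ii) by induction on $d$: it first invokes \cite[Theorem~5.8]{reesclu} (normal torsion-freeness gives a partition of $V(\C)$ into minimal vertex covers $Z_1,\dots,Z_d$) and \cite[Corollary~4.14]{reesclu} to obtain the perfect matching, then uses Eagon--Reiner and Fr\"oberg to get chordality of the complement of $\C^\vee$, finds a simplicial vertex via Dirac's characterization (Remark \ref{dirac}), shows the matched partner is a free vertex of $\C$, and deletes that color class to apply induction. You instead argue globally and non-inductively: you get two disjoint edges from the K\"onig property, prove $e_1\cup e_2=V(\C)$ by the zero-divisor/associated-prime observation together with unmixedness, and then identify $\C^\vee$ as a bipartite graph with no induced $2K_2$ (equivalently a chain graph), since an induced $2K_2$ would give an induced $4$-cycle in the chordal complement; the admissible labeling is then read off from the nested neighborhoods, with the crucial staircase inequality $\alpha(i)\le i$ extracted from $d$-uniformity via $(I(\C)^\vee)^\vee=I(\C)$ (so all minimal vertex covers of $\C^\vee$ have size $d$) plus K\"onig/Hall on $\C^\vee$, and admissibility of every edge verified directly against the covers $\{x_1^a,x_2^b\}$ with $a\le b$. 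Both arguments use Eagon--Reiner and Fr\"oberg; the paper's induction buys a very short reduction step at the cost of peeling off one color class at a time, while your approach exhibits the whole admissible structure at once and makes transparent exactly where each hypothesis enters (normal torsion-freeness only for the matching, uniformity only for the staircase). The one point you should shore up with a citation is the assertion that normal torsion-freeness implies the K\"onig property: this is standard (normally torsion-free edge ideals correspond to clutters with the max-flow min-cut property, which gives K\"onig as the unit-weight case) and is available from \cite{reesclu}, the same source the paper uses for its matching and partition; like the paper, you also implicitly assume every vertex lies in some edge, which is harmless here and is in fact covered by your own zero-divisor argument.
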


\begin{proof} ($\Rightarrow$) Since Cohen-Macaulay rings are unmixed, (i) is true. We shall prove (ii) by induction on $d$. 

We claim that $\C$ has a free vertex. As $I(\C)$ is 
normally torsion free, by \cite[Theorem~5.8]{reesclu}, there are minimal vertex covers 
$Z_1,\ldots,Z_d$ of $V(\C)$ such that $Z_1,\ldots,Z_d$ partition $V(\C)$ and $|Z_i\cap e|=1$ for every
$e\in E(\C)$ and $i=1,\dots,d$. Since $\C$ is
unmixed and since $Z_i$ is a minimal vertex cover of $\C$ for
all $i$, one has $|Z_i|=2$ for all $i$. It follows from
\cite[Corollary~4.14]{reesclu} that
$\C$ has a perfect matching, i.e., there are edges $e_1, e_2$
of $\C$ such that $e_1 \cap e_2=\emptyset$ and $e_1 \cup e_2 = V(\C)$. We may assume that $e_1=\{x_1,\ldots,x_d\}$, $e_2=\{y_1,\ldots,y_d\}$, and $Z_i=\{x_i,y_i\}$ for all $i$. Notice
that any minimal vertex cover $C$ of $\C$ has the form
$C=\{x,y\}$ for some $x\in e_1$ and $y\in e_2$. Let $G = \C^\vee$ be the Alexander dual of $\C$, which, in this case, is a graph.
The graph $G$ is bipartite with bipartition $e_1,e_2$. Since $R/I(\C)$ is Cohen-Macaulay, $I(G) = I(\C^\vee)$ has a linear resolution. It then follows from a result of Fr\"oberg \cite{Fro4} (see also \cite{Lyu1}) that the complement graph $G'$ of $G$ is chordal. By Remark \ref{dirac}, $G'$ has a simplicial vertex $z$. We may assume that $z=x_k$ for some $k$;
the case $z=y_k$ is symmetric. Observe that the induced subgraphs 
$G'_{e_1}$ and $G'_{e_2}$ of $G'$ on $e_1$ and $e_2$ are complete
graphs of size $d$. Next we prove that $x_k$ is not in 
$N_{G'}(e_2)$ for any $k$. If $x_k$ is in $N_{G'}(e_2)$ for some $k$, then
$\{x_k,y_\ell\}$ is an edge of $G'$ for some $\ell$. Consequently 
$y_\ell$ would have to be adjacent in $G'$ to any $x_i$ in $e_1$, in
particular $\{x_\ell,y_\ell\}\in E(G')$, a contradiction. Thus
$\{x_k,y_i\}\in E(G)$ for all $i$. Note that
$y_k$ is a free vertex of $\C$. Indeed let $e$ be any edge 
of $\C$ containing $y_k$, then $x_k$ is not in $e$ because
$|e\cap Z_k|=1$. Hence since $\{x_k,y_i\}$ is a vertex cover of $\C$ for any $i$ we
get that $y_i\in e$ for any $i$, i.e., $e=e_2$. 

Consider the edge
ideal $I'$ which is obtained from $I(\C)$ by making $x_k=1$
and $y_k=1$. Let $\C'$ be the clutter on $V' = V(\C) \setminus\{x_k,y_k\}$ that corresponds to $I'$, i.e., $I'=I(\C')$. The ideal $I'$ is Cohen-Macaulay of height two, normally torsion 
free, and is generated by monomials of degree $d-1$. 
Therefore, by the induction hypothesis, there is a partition
$X^2=\{x_1^2,x_2^2\}, \dots, X^d=\{x_1^d,x_2^d\}$ of $V'$ such that
all edges of $\C'$ have the form $\{x_{i_2}^2, \dots, x_{i_d}^d\}$ for some
$1\leq i_1\leq\dots\leq i_d\leq 2$. To complete the proof 
we set $x_1^1=x_k$, $x_2^1=y_k$, and $X^1=\{x_1^1,x_2^1\}$.

($\Leftarrow$) It follows from Theorem~\ref{g=2quotients}. Here, the
assumption that $I(\C)$ is normally torsion free is not
needed.

\end{proof}


\section{Cohen-Macaulay Clutters of Height Three} \label{g=3}

In this section, we prove Conjecture \ref{conjecture} in the case of uniform clutters with $g = \height I(\C) = 3$. Our method in this case, similar to the case of height 2, is to give an ordering for the set of all tuples $T = \{ (a,b,c) ~|~ 1 \le a,b,c \le d\}$, and then to show that $I(\C)^\vee$, whose minimal generators are labeled by the increasing order of their exponent vectors (induced by the ordering on $T$), admits linear quotients.

\begin{theorem} \label{g=3quotients}
Let $\C$ be a uniform admissible clutter of height 3. Then the Alexander dual $I(\C)^\vee$ of $I(\C)$ has linear quotients, and so $\C$ is a Cohen-Macaulay clutter.
\end{theorem}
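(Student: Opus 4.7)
The plan is to mimic the strategy of Theorem \ref{g=2quotients}: choose an ordering on the exponent triples $T = \{(a,b,c) \mid 1\le a,b,c\le d\}$, label the minimal generators $u_1,\ldots,u_s$ of $I(\C)^\vee$ by the increasing order of their exponent vectors, and then use Lemmas \ref{RaiseTheEnd} and \ref{LowerTheFront} to verify the linear-quotients condition. The remark after Theorem \ref{g=2quotients} warns that for $g \ge 3$ both lemmas must be available simultaneously, so I would design the ordering to privilege the two positions where each of them imposes no side condition. Concretely, I would order $T$ primarily by the first coordinate ascending, secondarily by the third coordinate descending, and use the second coordinate (say ascending) only as a tiebreaker. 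The rationale: Lemma \ref{LowerTheFront} at position $1$ always succeeds (its only precondition is that the first exponent exceed $1$), and by the remark after Lemma \ref{RaiseTheEnd} the same freedom applies to raising at the last position $i = g = 3$ (the only precondition is that the current value be less than $d$). So both the first and third coordinates can be driven to any desired target by iterated one-step moves, and the ordering rewards exactly these moves.

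With the ordering fixed, let $j < i$ with $\deg(u_j/\gcd(u_j,u_i)) \ge 2$; write the exponent vectors as $(j_1,j_2,j_3)$ and $(i_1,i_2,i_3)$. The ordering forces either $j_1 < i_1$, or else $j_1 = i_1$ and $j_3 > i_3$: if neither holds, then $u_j < u_i$ forces $j_1 = i_1$, $j_3 = i_3$, and the only possible difference is at position $2$, making $u_j/\gcd(u_j,u_i)$ linear and contradicting the hypothesis. In the first subcase I would iterate Lemma \ref{LowerTheFront} at position $1$ starting from $u_i$, lowering the first coordinate one step at a time until reaching the minimal vertex cover $u_k$ with exponent vector $(j_1,i_2,i_3)$; this $u_k$ precedes $u_i$ by the primary key, and $u_k/\gcd(u_k,u_i) = x_1^{j_1}$ is a linear factor of $u_j/\gcd(u_j,u_i)$. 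In the second subcase I would iterate Lemma \ref{RaiseTheEnd} at position $3$ starting from $u_i$ to reach the cover $u_k$ with exponent vector $(i_1,i_2,j_3)$; this $u_k$ precedes $u_i$ by the secondary key, with $u_k/\gcd(u_k,u_i) = x_3^{j_3}$ a linear factor of $u_j/\gcd(u_j,u_i)$. In either case the generator of $((u_1,\ldots,u_{i-1}) : u_i)$ coming from $u_j$ is divisible by a linear generator, so linear quotients follow and Cohen-Macaulayness is then immediate from Theorem \ref{thm.er}.

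The main obstacle is not the case analysis itself but the ordering design. For $g = 2$, every disagreement between two covers already touches an ``extreme'' coordinate (position $1$ or $2$), so a single lemma suffices; for $g = 3$ the middle coordinate is delicate because neither Lemma \ref{RaiseTheEnd} nor Lemma \ref{LowerTheFront} adjusts it unconditionally, and the fix is to demote the middle coordinate to the tiebreaker so that any multi-coordinate disagreement is forced to involve position $1$ or position $3$. This trick is specific to $g = 3$: for $g \ge 4$ there are at least two middle coordinates and the same scheme cannot absorb them, which is consistent with the counterexamples presented in Section \ref{higherg}.
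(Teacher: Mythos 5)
Your proposal is correct and follows essentially the same route as the paper: the same ordering on exponent triples (first coordinate ascending, third descending, with the middle coordinate as an irrelevant tiebreaker), the same observation that a degree-$\ge 2$ quotient forces a disagreement in an end coordinate, and the same use of Lemma \ref{LowerTheFront} at position $1$ and Lemma \ref{RaiseTheEnd} at position $3$ to produce an earlier generator with a linear quotient. Your explicit iteration of the one-step lemmas is a minor elaboration of what the paper leaves implicit.
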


\begin{proof} Since $\C$ is unmixed, it will be Cohen-Macaulay if $I(\C)^\vee$ has linear quotients, as noted in the Introduction. To prove $I(\C)^\vee$ has linear quotients, we extend the ordering of $S$ given in the proof of Theorem \ref{g=2quotients} to an ordering of the elements in $T$. We order $(a,b,c) < (d,f,h)$ if $(a,c) < (d,h)$ in $S$, and order $(a,b,c) < (a,b',c)$ if $b > b'$. Thus the elements of $T$ are ordered $(1,d,d) < (1,d-1,d) < (1,d-2,d) < \dots < (1,1,d) < (1,d,d-1) < (1,d-1,d-1) < (1,d-2,d-1) < \dots < (1,1,d-1) < (1,d,d-2) < \dots < (d,1,1)$. This induces an ordering on the exponent vectors of the minimal generators of $I(\C)^\vee$. We shall label the minimal generators of $I(\C)^\vee$ in the increasing order of their exponent vectors. Assume that the minimal generators of $I(\C)^\vee$ are labeled as $u_1, \dots, u_s$.

Suppose that for some $j < i$, we have $\deg u_j/\gcd(u_j,u_i) \ge 2$. Let $u_j = x^{j_1}_1x^{j_2}_2x^{j_3}_3$ and $u_i = x^{i_1}_1x^{i_2}_2x^{i_3}_3$. Then the exponent vectors $(j_1,j_2,j_3)$ and $(i_1,i_2,i_3)$ differ in at least two positions. Since $g=3$, this means that these two vectors differ in at least one of the two ends. If $j_1 \not= i_1$, then by the chosen ordering, $j_1 < i_1$. It follows from Lemma \ref{LowerTheFront} that $u_k = x_1^{j_1}x_2^{i_2}x_3^{i_3}$ is a minimal generator of $I(\C)^\vee$. By the chosen ordering, we also have $k < i$. Moreover, $x^{j_1}_1 = u_k/\gcd(u_k,u_i)$ is a linear factor of $u_j/\gcd(u_j,u_i)$.

Consider the case when $j_1 = i_1$. In this case, $j_l \not= i_l$ for $l=2,3$. By the chosen ordering, $j_3 > i_3$. Thus, it follows from Lemma \ref{RaiseTheEnd} that $u_k = x_1^{i_1}x_2^{i_2}x_3^{j_3}$ is a minimal generator of $I(\C)^\vee$. By the chosen ordering, we also have $k < i$. The conclusion follows from the fact that $x^{j_3}_3 = u_k/\gcd(u_k,u_i)$ is a linear factor of $u_j/\gcd(u_j,u_i)$. 
\end{proof}

Note that there are other orderings that work in the proof of Theorem \ref{g=3quotients} as well. For example, for $d=3$, a similar ordering given by $(a,b,c) < (d,f,h)$ if $(a,c) < (d,h)$ in the sequence $(1,3) < (1,2) < (2,3) < (2,2) < (1,1) < (2,1) < (3,3) < (3,2) < (3,1)$, and $(a,b,c) < (a,b',c)$ if $b > b'$, satisfies the two necessary requirements: raising the third entry and lowering the first entry of a generator of $I(\C)^\vee$ both result in a generator which occurs earlier in the list. This ordering will also give linear quotients in $I(\C)^\vee$. 

Both of the requirements above are necessary. We shall give an example of an unmixed uniform admissible clutter $\C$ satisfying $d=g=3$ for which the minimal vertex covers (which all have size $3$) do not have linear quotients under the reverse lexicographic ordering. Notice that you can raise the end in reverse lex, but if you lower the front, you might get an element that is higher in the overall ordering.

\begin{example}
Let $\C=\{ x_1y_1z_1, x_2y_2z_2, x_3y_3z_3, x_1y_2z_3 \} $, where to simplify notation, $x,y,z$ represent elements of $X^1,X^2,X^3$ respectively. There are 19 minimal vertex covers of order 3. Under the reverse lexicographic ordering, linear quotients fails. Indeed, let $C_i=\{ z_1, y_2, y_3\} $. It can be checked that $C_i$ is a cover. Also, $C_j= \{ x_1,z_2,y_3\} $ is also a cover, and $x_1z_2y_3 < z_1y_2y_3$, while the colon is $(x_1z_2y_3 : z_1y_2y_3) = (x_1z_2)$. Now $\{ z_1, z_2, y_3\} $ misses the edge $x_1y_2z_3$ of $\C$ and so is not a cover, so $((u_1, \ldots , u_{i-1}):u_i)$ will not contain $z_2$. Here, $u_l$'s are monomials corresponding to vertex covers $C_l$'s. Now $\{ x_1, y_2, y_3\} $ is a cover, but under the reverse lexicographic order, $u_k=x_1y_2y_3 > u_i=z_1y_2y_3$. Thus $x_1$ is also not in $(u_1, \ldots , u_{i-1}:u_i)$. Hence reverse lexicographic order will not suffice for $g=3$.
\end{example}


\section{Linear Quotients and Clutters of Higher Heights} \label{higherg}

In this section, we give a criterion in the case of uniform clutters of $\height I(\C) = 4$ under which an admissible unmixed clutter is Cohen-Macaulay, and present a family of examples to show that Conjecture \ref{conjecture} may fail when $\height I(\C) \ge 4$, even in the uniform case. 

We start by considering the case when $g = \height I(\C) = 4$. For convenience, we shall identify the vertex cover $C=(x_1^a,x_2^b,x_3^c,x_4^d)$ of $\C$ with the generator $u=x_1^ax_2^bx_3^cx_4^d$ of the Alexander dual $I(\C)^\vee$. That is, when we use the notation $(x_1^a,x_2^b,x_3^c,x_4^d)$ it is understood that we are talking about a vertex cover (as a set) of $\C$, and when we use the monomial notation $x_1^ax_2^bx_3^cx_4^d$ it is understood that we are talking about the same vertex cover but as a generator of the Alexander dual.

\begin{definition} \label{OrderCondition}
Let $\C$ be an unmixed uniform admissible clutter with $g=4$. We say that $\C$ {\bf satisfies condition (*)} if there exist two vertex covers $C_1 = (x_1^a, x_2^b, x_3^c, x_4^d)$ and $C_2 = (x_1^a, x_2^s, x_3^t, x_4^d)$ sharing the first and the last vertices so that neither $(x_1^a,x_2^b,x_3^t,$ $x_4^d)$ nor $(x_1^a,x_2^s,x_3^c,x_4^d)$ is a vertex cover of $\C$. In this case, we call the pair $(C_1,C_2)$ a {\bf bad vertex cover pair} of $\C$.
\end{definition}

\begin{lemma} \label{NotSymmetry}
Let $\C$ be an unmixed uniform admissible clutter with $g = 4$, and assume that $\C$ does not satisfy condition (*). Suppose $(x_1^a, x_2^b, x_3^c, x_4^d)$ and $(x_1^a, x_2^s, x_3^t, x_4^d)$ are vertex covers of $\C$ with $c > t$ such that $(x_1^a, x_2^s, x_3^c, x_4^d)$ is not a vertex cover of $\C$. Then, there does not exist a vertex cover $(x_1^a, x_2^s, x_3^q, x_4^d)$ of $\C$ with $q > t$ such that $(x_1^a, x_2^b, x_3^q, x_4^d)$ is not a vertex cover of $\C$.
\end{lemma}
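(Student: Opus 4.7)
The plan is to proceed by contradiction and realize the data of the lemma as the structure of a bad vertex cover pair, thereby invoking the assumption that $\C$ does not satisfy condition (*). Assume for contradiction that some vertex cover $C_3 = (x_1^a, x_2^s, x_3^q, x_4^d)$ exists with $q > t$ such that $(x_1^a, x_2^b, x_3^q, x_4^d)$ is not a vertex cover of $\C$. Set $C_1 = (x_1^a, x_2^b, x_3^c, x_4^d)$, which is a vertex cover by hypothesis.

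Next, I would observe that the pair $(C_1, C_3)$ satisfies every requirement of Definition \ref{OrderCondition}. The covers $C_1$ and $C_3$ share the first vertex $x_1^a$ and the last vertex $x_4^d$. The two associated ``swap'' subsets are $(x_1^a, x_2^b, x_3^q, x_4^d)$ and $(x_1^a, x_2^s, x_3^c, x_4^d)$: the former fails to be a vertex cover by the contradiction hypothesis, and the latter fails to be a vertex cover by the standing hypothesis of the lemma. Hence $(C_1, C_3)$ is a bad vertex cover pair, directly contradicting the assumption that $\C$ does not satisfy condition (*).

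A small technicality worth addressing is that $(C_1, C_3)$ must be a genuinely distinct pair in the middle entries, otherwise the ``swap'' subsets trivially coincide with one of the covers and are automatically vertex covers. Here $b \neq s$: otherwise $(x_1^a, x_2^s, x_3^c, x_4^d)$ would equal $C_1$ and thus be a cover, contradicting the hypothesis; similarly $c \neq q$ because $(x_1^a, x_2^b, x_3^q, x_4^d)$ is not a cover while $C_1$ is. There is essentially no analytic obstacle in this argument -- it is a purely definitional reduction to the failure of condition (*) -- so the proof is expected to be very short. The content of the lemma is that ruling out condition (*) forces a one-sided asymmetry between the roles played by the exponents $b$ and $s$, and the proof machinery amounts to recognizing the hypothesized data as already exhibiting a bad pair.
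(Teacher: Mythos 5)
Your proof is correct and is essentially identical to the paper's own argument: assume such a cover exists, observe that it together with $(x_1^a,x_2^b,x_3^c,x_4^d)$ forms a bad vertex cover pair (the two swapped tuples failing to be covers by the lemma's hypothesis and the contradiction hypothesis), contradicting the assumption that $\C$ does not satisfy condition (*). The added remark on distinctness of the middle entries is a harmless extra check not present in the paper.
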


\begin{proof} Suppose, by contradiction, that such a vertex cover $(x_1^a, x_2^s, x_3^q, x_4^d)$ of $\C$ exists. Then $(x_1^a,x_2^s,x_3^q,x_4^d)$ and $(x_1^a,x_2^b,x_3^c,x_4^d)$ form a bad vertex cover pair of $\C$, and so $\C$ satisfies condition (*), a contradiction.
\end{proof}

\begin{theorem} \label{thm.g=4}
Let $\C$ be an unmixed uniform admissible clutter with $g=4$. Then the Alexander dual $I(\C)^\vee$ of $I(\C)$ has linear quotients if and only if $\C$ does not satisfy condition (*). In particular, if $\C$ does not satisfy condition (*) then $\C$ is a Cohen-Macaulay clutter.
\end{theorem}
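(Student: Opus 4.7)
My plan is to prove both directions of the biconditional, using exponent vector notation $(a,b,c,d)$ as shorthand for the minimal vertex cover $(x_1^a,x_2^b,x_3^c,x_4^d)$ (and the corresponding monomial generator of $I(\C)^{\vee}$). I begin with the easier $(\Rightarrow)$ direction. Suppose $I(\C)^{\vee}$ has linear quotients under some ordering $u_1,\dots,u_s$ of its minimal generators and, for contradiction, that $\C$ satisfies condition (*). Let $(C_1,C_2)$ be a bad vertex cover pair with exponent vectors $(a,b,c,d)$ and $(a,s,t,d)$, and write $u_{C_1}=u_j$, $u_{C_2}=u_i$ with $j<i$ (swapping if necessary). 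Since $C_1,C_2$ agree only in positions~$1$ and~$4$, we have $u_j/\gcd(u_j,u_i)=x_2^bx_3^c$, which lies in the colon ideal $(u_1,\dots,u_{i-1}):u_i$. By linear quotients, one of the variables $x_2^b$ or $x_3^c$ must appear as $u_k/\gcd(u_k,u_i)$ for some $k<i$, forcing $u_k$ to correspond to the tuple $(a,b,t,d)$ or $(a,s,c,d)$, neither of which is a vertex cover. This contradiction yields $(\Rightarrow)$.

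For the $(\Leftarrow)$ direction, assume $\C$ does not satisfy (*). The first task is a structural observation: for each fixed pair $(a,d)$, set $M(a,d):=\{(b,c)\mid (a,b,c,d)\text{ is a minimal vertex cover}\}$ and $I_b:=\{c\mid (b,c)\in M(a,d)\}$. By Lemma~\ref{Consecutive} applied at position~$3$, each $I_b$ is a contiguous interval; and $\neg$(*) forces $\{I_b\}_b$ to be a chain under inclusion, since incomparable intervals $I_{b_1},I_{b_2}$ would yield $c_1\in I_{b_1}\setminus I_{b_2}$ and $c_2\in I_{b_2}\setminus I_{b_1}$, producing a bad pair. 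I then order the minimal generators by declaring $(a,b,c,d)\prec(a',b',c',d')$ whenever $a<a'$; or $a=a'$ and $d>d'$; or $a=a'$, $d=d'$, and $I_b\supsetneq I_{b'}$ within $M(a,d)$; or $a=a'$, $d=d'$, $I_b=I_{b'}$, and $b<b'$; or $a=a'$, $d=d'$, $b=b'$, and $c<c'$. The chain property makes this a total order.

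The verification of linear quotients proceeds case by case on how $u_j\prec u_i$ arises. When $a_j<a_i$, iterating Lemma~\ref{LowerTheFront} produces the witness $u_k=(a_j,b_i,c_i,d_i)$, which satisfies $u_k\prec u_i$ and $u_k/\gcd(u_k,u_i)=x_1^{a_j}$ divides $u_j/\gcd(u_j,u_i)$; when $a_j=a_i$ and $d_j>d_i$, iterating Lemma~\ref{RaiseTheEnd} gives $u_k=(a_i,b_i,c_i,d_j)$ analogously. In the same-block case where $b_j\neq b_i$ and $c_j\neq c_i$, the ordering forces $I_{b_i}\subseteq I_{b_j}$ (strictly, or with equality and $b_j<b_i$), so $c_i\in I_{b_i}\subseteq I_{b_j}$ automatically gives $u_k=(a_i,b_j,c_i,d_i)\in M(a_i,d_i)$, with $u_k\prec u_i$ and $u_k/\gcd(u_k,u_i)=x_2^{b_j}$ dividing $u_j/\gcd(u_j,u_i)=x_2^{b_j}x_3^{c_j}$. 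The main obstacle is precisely this same-block subcase: the lex-style orderings used for $g\leq 3$ fail here, and the insight is to replace $b$-value comparisons with the inclusion order on the rows $\{I_b\}$, so the chain structure from $\neg$(*) does the work. The ``in particular'' statement then follows because $I(\C)^{\vee}$ is generated in a single degree by unmixedness, so linear quotients imply a linear resolution, whence Cohen-Macaulayness via Theorem~\ref{thm.er}.
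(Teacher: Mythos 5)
Your proof is correct, and while the $(\Rightarrow)$ direction and the cross-block witnesses (lowering the first exponent via Lemma \ref{LowerTheFront}, raising the last via Lemma \ref{RaiseTheEnd}) coincide with the paper's argument, your $(\Leftarrow)$ direction organizes the within-block order in a genuinely different way. The paper orders generators sharing $(a,d)$ primarily by the third exponent (rule (2)), and breaks ties among generators with equal third exponent by rules (3) and (4), whose status as a well-defined order is not obvious: antisymmetry requires Lemma \ref{NotSymmetry} and transitivity requires a three-case analysis invoking Lemma \ref{Consecutive}; the subsequent verification of linear quotients then splits further (the subcases $c\le d$ and $c>\max\{p,d\}$, the latter using rule (3) together with the failure of (*)). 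You instead repackage the failure of (*) as the statement that, within each $(a,d)$-block, the fibers $I_b$ form a chain under inclusion, and you order the block by reverse inclusion of these fibers with $b$ and $c$ as tiebreaks. This buys an essentially immediate well-definedness proof (a lexicographic product of total orders, with no need for Lemma \ref{NotSymmetry} or Lemma \ref{Consecutive} -- your interval remark is in fact never used) and a single uniform same-block case: $I_{b_i}\subseteq I_{b_j}$ directly supplies the witness $(a,b_j,c_i,d)$. The paper's formulation, on the other hand, keeps the order description purely in terms of individual covers and makes visible exactly where condition (*) intervenes (rule (3)), at the cost of the longer well-definedness argument. One cosmetic remark: in the same-block discussion you treat only the case where both $b$ and $c$ differ; you should note (as the paper does with ``$W=U$'') that when $u_j/\gcd(u_j,u_i)$ is already linear the pair needs no witness, but this is routine and does not affect correctness.
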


\begin{proof} The last statement of the theorem follows from our observations in the Introduction. We shall prove the first statement of the theorem. Suppose first that $\C$ satisfies property (*). We shall show that $I(\C)^\vee$ does not have linear quotients. Let $(C_1,C_2)$ be a bad vertex cover pair of $\C$, where $C_1 = (x_1^a,x_2^b,x_3^c,x_4^d)$ and $C_2 = (x_1^a,x_2^s,x_3^t,x_4^d)$. We shall use $u_1$ and $u_2$ to denote the corresponding monomials in $I(\C)^\vee$. Suppose there is an order of the minimal generators of $I(\C)^\vee$ that admits linear quotients. Without loss of generality, assume that $u_1 < u_2$ in this order. Since $u_1/\gcd(u_1,u_2) = x_2^bx_3^c$, in order to have linear quotients, there must exists a generator $u < u_2$ of $I(\C)^\vee$ such that $u/\gcd(u,u_2)$ is a linear factor of $x_2^bx_3^c$. Since $\C$ is unmixed, this implies that $u$ has to be either $x_1^ax_2^bx_3^tx_4^d$ or $x_1^ax_2^sx_3^cx_4^d$. This is a contradiction to the fact that $\C$ satisfies property (*) and $(C_1,C_2)$ is a bad vertex cover pair of $\C$.

Conversely, assume that $\C$ does not satisfy property (*). We shall construct an order of the generators of $I(\C)^\vee$ that admits linear quotients. We order these generators by the following rules:
\begin{enumerate}
\item $x_1^ax_2^bx_3^cx_4^d \prec x_1^mx_2^nx_3^px_4^q$ if $(a,d) < (m,q)$ in the order given in Theorem \ref{g=2quotients}, i.e., if $a < m$ or if $a=m$ and $d > q$,
\item $x_1^ax_2^bx_3^cx_4^d \prec x_1^ax_2^sx_3^tx_4^d$ if $c > t$,
\item $x_1^ax_2^bx_3^tx_4^d \prec x_1^ax_2^sx_3^tx_4^d$ if there exists $c > t$ such that $(x_1^a,x_2^b,x_3^c,x_4^d)$ is a vertex cover of $\C$, but $(x_1^a,x_2^s,x_3^c,x_4^d)$ is not, and
\item $x_1^ax_2^bx_3^tx_4^d \succ x_1^ax_2^sx_3^tx_4^d$ if such a $c$ as in (3) does not exist and $b < s$.
\end{enumerate}

We claim that this is a well-defined total order of the generators of $I(\C)^\vee$. Clearly, it suffices to prove that this partial order is well-defined (then it follows that the partial order is a total order). Indeed, rules (1) and (2) are well-defined. We only need to show that rules (3) and (4) are also well-defined. To this end, we show that the partial order given by rules (3) and (4) is anti-symmetric and transitive.

Suppose $u_1 = x_1^ax_2^bx_3^tx_4^d$ and $u_2 = x_1^ax_2^sx_3^tx_4^d$ are distinct generators of $I(\C)^{\vee}$ such that $u_1 \preceq u_2$ and $u_2 \preceq u_1$ by rules (3) and (4). We need to show that $u_1 = u_2$. Clearly, if both of the inequalities $u_1 \preceq u_2$ and $u_2 \preceq u_1$ are given by rule (4) then $b = s$, and hence, $u_1 = u_2$. Assume now that $u_1 \preceq u_2$ is given by the existence of $c > t$ such that $(x_1^a,x_2^b,x_3^c,x_4^d)$ is a vertex cover of $\C$, but $(x_1^a,x_2^s,x_3^c,x_4^d)$ is not. By the existence of $c$, $u_2 \preceq u_1$ cannot be given by rule (4). This implies that if $u_1 \not= u_2$ then $u_2 \preceq u_1$ is given by the existence of $q > t$ such that $(x_1^a, x_2^s, x_3^q, x_4^d)$ is a vertex cover of $\C$ but $(x_1^a, x_2^b, x_3^q, x_4^d)$ is not. However, the existence of such a $q$ contradicts the assertion of Lemma \ref{NotSymmetry}. Hence, $u_1 = u_2$. That is, the order $\prec$ is anti-symmetric.

Suppose $C_1 = (x_1^a,x_2^b,x_3^t,x_4^d), C_2 = (x_1^a,x_2^s,x_3^t,x_4^d)$ and $C_3 = (x_1^a,x_2^r,x_3^t,x_4^d)$ are vertex covers of $\C$ such that for the corresponding monomials we have $u_1 \prec u_2$ and $u_2 \prec u_3$. To get transitivity, we need to show that $u_1 \prec u_3$. Clearly, the partial order given by rule (4) is transitive. 

{\bf Case 1:} $u_1 \prec u_2$ and $u_2 \prec u_3$ are given by rule (3). That is, there exist $c, p > t$ such that
\begin{itemize}
\item $A = (x_1^a,x_2^b,x_3^c,x_4^d)$ and $B = (x_1^a,x_2^s,x_3^p,x_4^d)$ are vertex covers of $\C$, and
\item $C = (x_1^a,x_2^s,x_3^c,x_4^d)$ and $D = (x_1^a,x_2^r,x_3^p,x_4^d)$ are not vertex covers of $\C$.
\end{itemize}

Observe that since $C_2$ and $B$ are vertex covers of $\C$, it follows from Lemma \ref{Consecutive} that $(x_1^a,x_2^s,x_3^l,x_4^d)$ is a vertex cover of $\C$ for any $t \le l \le p$. Since $C$ is not a vertex cover of $\C$, we must have $c > p$. 

If $(x_1^a,x_2^r,x_3^c,x_4^d)$ is a vertex cover of $\C$ then by considering it together with the vertex cover $C_3$, it also follows from Lemma \ref{Consecutive} that $(x_1^a,x_2^r,x_3^l,x_4^d)$ is a vertex cover of $\C$ for any $t \leq l \leq c$. In particular, this implies that $D = (x_1^a,x_2^r,x_3^p,x_4^d)$ is a vertex cover of $\C$, a contradiction. Therefore, $(x_1^a,x_2^r,x_3^c,x_4^d)$ is not a vertex cover of $\C$. By rule (3), this implies that $u_1 \prec u_3$.

{\bf Case 2:} $u_1 \prec u_2$ is given by rule (3) and $u_2 \prec u_3$ is given by rule (4). If $u_3 \prec u_1$ by rule (4) then, since rule (4) is transitive, we have $u_2 \prec u_1$. This implies that $u_1 = u_2$, a contradiction. If $u_3 \prec u_1$ by rule (3) then by the same argument as in Case 1 above, we have $u_3 \prec u_2$. Again, this implies $u_2 = u_3$, a contradiction. Hence, we must have $u_1 \prec u_3$.

{\bf Case 3:} $u_1 \prec u_2$ by rule (4) and $u_2 \prec u_3$ by rule (3). We can use the same line of arguments as in Case 2 to conclude that $u_1 \prec u_3$.

We have shown that the order $\prec$ is transitive. Hence, $\prec$ gives a total order on the generators of $I(\C)^\vee$. It remains to show that under $\prec$, $I(\C)^\vee$ admits linear quotients.

Consider any two generators $U = x_1^ax_2^bx_3^cx_4^d$ and $V = x_1^mx_2^nx_3^px_4^q$ such that $U \prec V$. If $a \not= m$ then by our order (as given in the case $g = 2$), we have $a < m$. By Lemma \ref{LowerTheFront}, we have $W = x_1^ax_2^nx_3^px_4^q$ is a generator of $I(\C)^{\vee}$. Moreover, $W \prec V$ and $W/\gcd(W,V)$ is a linear factor of $U/\gcd(U,V)$. Assume that $a = m$. 

By a similar argument using Lemma \ref{RaiseTheEnd} in place of Lemma \ref{LowerTheFront}, if $d \not= q$, we can find a generator $W = x_1^mx_2^nx_3^px_4^d \prec V$ such that $W/\gcd(W,V)$ is a linear factor of $U/\gcd(U,V)$. Assume now that $a = m$ and $d = q$. 

If $c = p$ then we can take $W = U$ to have $W/\gcd(W,V)$ being linear, so we assume that $c > p$. If $c \le d$, then by using Lemma \ref{RaiseTheEnd}, we have a generator $W = x_1^ax_2^nx_3^cx_4^d \prec V$ such that $W/\gcd(W,V)$ is a linear factor of $U/\gcd(U,V)$. Thus, we may also assume that $c > d$. We now have $c > \max\{p,d\}$. 

If $x_1^ax_2^nx_3^cx_4^d$ is a generator of $I(\C)^{\vee}$ then by taking $W = x_1^ax_2^nx_3^cx_4^d$, we also have $W \prec V$ and $W/\gcd(W,V)$ is a linear factor of $U/\gcd(U,V)$. If $(x_1^a,x_2^n,x_3^c,x_4^d)$ is not a vertex cover of $\C$ then by rule (3), we have $W = x_1^ax_2^bx_3^px_4^d \prec V$ and $W/\gcd(W,V)$ is a linear factor of $U/\gcd(U,V)$. Moreover, in this case, since $\C$ does not satisfy condition (*), $W$ is a vertex cover of $\C$. 

Hence, we have shown that in any case, we can always find a generator $W$ of $I(\C)^{\vee}$ such that $W \prec V$ and $W/\gcd(W,V)$ is a linear factor of $U/\gcd(U,V)$. This shows that $I(\C)^\vee$, under the order $\prec$, admits linear quotients. The theorem is proved.
\end{proof}

We shall now give a family of counterexamples to Conjecture \ref{conjecture} when $g \ge 4$. 

\begin{lemma}[see {\cite[Proposition 6.2.7]{V}}] \label{reduce-g}
Let $S = K[x_1, \dots, x_n]$, $T = K[y_1, \dots, y_m]$, and $R = K[x_1, \dots, x_n, y_1, \dots, y_m]$. Suppose $I \subset S$ and $J \subset T$ are homogeneous ideals, and $IR$ and $JR$ are their extensions in $R$, respectively. Then $R/(IR, JR)$ is a Cohen-Macaulay ring if and only if both $S/I$ and $T/J$ are Cohen-Macaulay rings.
\end{lemma}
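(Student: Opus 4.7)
The plan is to recognize $R/(IR + JR)$ as the tensor product $(S/I) \otimes_K (T/J)$ over the ground field $K$, and then appeal to additivity of dimension and depth for tensor products of finitely generated graded $K$-algebras.

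Since $R = S \otimes_K T$ as graded $K$-algebras and $K$ is a field, we have $IR = I \otimes_K T$ and $JR = S \otimes_K J$, so right-exactness of tensor products yields $R/(IR + JR) \cong (S/I) \otimes_K (T/J)$. Set $A = S/I$ and $B = T/J$. The dimension formula $\dim(A \otimes_K B) = \dim A + \dim B$ follows routinely: combining homogeneous systems of parameters for $A$ and $B$ produces one for the tensor product, and Noether normalization gives the reverse inequality.

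The heart of the proof is the depth additivity $\operatorname{depth}(A \otimes_K B) = \operatorname{depth}(A) + \operatorname{depth}(B)$, where depth is taken with respect to the irrelevant maximal ideal. Given a homogeneous $A$-regular sequence $f_1, \dots, f_r$ of length $\operatorname{depth}(A)$ in the irrelevant ideal of $A$, and similarly $g_1, \dots, g_s$ of length $\operatorname{depth}(B)$ in the irrelevant ideal of $B$, I would show that the combined sequence
\[
f_1 \otimes 1, \dots, f_r \otimes 1, 1 \otimes g_1, \dots, 1 \otimes g_s
\]
is $A \otimes_K B$-regular. Flatness of $B$ over $K$ makes the Koszul complex on the $f_i$'s over $A$ acyclic after $- \otimes_K B$, proving regularity of the first $r$ elements; a symmetric argument on $(A/(f_1, \dots, f_r)) \otimes_K B$ handles the $g_j$'s. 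The matching upper bound on depth follows from a K\"unneth-style decomposition of local cohomology of $A \otimes_K B$ at its irrelevant ideal.

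With these additivity formulas, Cohen-Macaulayness of $R/(IR+JR)$ reduces to the equation $\operatorname{depth}(A) + \operatorname{depth}(B) = \dim(A) + \dim(B)$, which, since $\operatorname{depth} \le \dim$ in each factor, forces equality in each factor; i.e., both $A$ and $B$ are Cohen-Macaulay. Conversely, Cohen-Macaulayness of $A$ and $B$ plugs directly into the additivity formulas to give Cohen-Macaulayness of the tensor product. The main obstacle is the depth additivity: establishing that the combined sequence is genuinely regular and that its length is sharp requires careful use of flatness over $K$ and of the bigraded structure on $A \otimes_K B$.
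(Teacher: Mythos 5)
Your proposal is correct; note that the paper itself gives no proof of this lemma, simply citing \cite[Proposition 6.2.7]{V}, and your argument — identifying $R/(IR+JR)$ with $(S/I)\otimes_K(T/J)$ and proving additivity of dimension and of depth (the lower bound via transporting regular sequences across the free/flat extension, the upper bound via the K\"unneth decomposition of local cohomology at the irrelevant ideals) — is exactly the standard proof found in such references. The only minor points worth making explicit are the trivial case where one of $S/I$, $T/J$ is the zero ring and the routine identification of graded depth at the irrelevant ideal with the local depth used to characterize Cohen--Macaulayness of a finitely generated graded $K$-algebra.
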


\begin{theorem} \label{counterexamples}
Let $g \ge 4$ be an integer. Then there always exists a uniform, admissible and unmixed clutter $\C$ of height $g$ that is not Cohen-Macaulay.
\end{theorem}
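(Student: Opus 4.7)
The plan is to handle the base case $g=4$ by exhibiting a single explicit counterexample, then to reduce every higher height to it by a disjoint-union construction based on Lemma \ref{reduce-g}.

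For the base case, I would construct a $d$-uniform admissible unmixed clutter $\C_0$ of height $4$ that satisfies condition~(*) of Definition \ref{OrderCondition}, starting the search at $d=4$. Concretely, one takes as vertices the grid $\{x_j^i : 1 \le i,j \le 4\}$ with color classes $X^i=\{x_1^i,\ldots,x_4^i\}$ and perfect matching $e_j=\{x_j^1,\ldots,x_j^4\}$, then adjoins a small collection of admissible edges chosen to force the existence of a bad vertex cover pair $(C_1,C_2)$ while keeping every minimal vertex cover of size $4$. The resulting clutter is automatically uniform, admissible, and, once the extra edges are chosen carefully, unmixed; by Theorem \ref{thm.g=4} it then admits no ordering of the minimal generators of $I(\C_0)^{\vee}$ with linear quotients.

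Since the failure of linear quotients does not by itself rule out a linear resolution, the essential computation is to verify that $I(\C_0)^{\vee}$ has a genuinely nonlinear syzygy, for example by exhibiting a graded Betti number $\beta_{i,j}(I(\C_0)^{\vee})$ with $j>i+4$. Equivalently, via Reisner's criterion one may show that some link in the Stanley-Reisner complex of $I(\C_0)$ has nontrivial reduced homology below its top dimension. Either route, combined with Theorem \ref{thm.er}, certifies that $R/I(\C_0)$ fails to be Cohen-Macaulay.

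For the inductive step, fix $g\ge 5$ and let $\C_1,\dots,\C_{g-4}$ be $g-4$ disjoint admissible $d$-uniform clutters, each consisting of a single edge on $d$ fresh vertices, one in each color class. Their disjoint union with $\C_0$ yields a clutter $\C$ on disjoint variable sets which is again $d$-uniform; concatenating the matchings $\{e_1,\dots,e_4\}$ with the new single edges gives a perfect matching of K\"onig type of size $g$, and amalgamating the color classes shows that every edge remains admissible. Any minimal vertex cover of $\C$ restricts to a minimal vertex cover on each summand, so $\C$ is unmixed of height $4+(g-4)=g$. An iterated application of Lemma \ref{reduce-g} to the extensions $I(\C_i)R$ shows that $R/I(\C)$ is Cohen-Macaulay if and only if every $R/I(\C_i)$ is, and the summand $\C_0$ is not; hence neither is $\C$.

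The main obstacle is thus the base case in height $4$: producing a sufficiently small admissible unmixed example satisfying condition~(*) whose Alexander dual admits a verifiably nonlinear resolution. The reduction to arbitrary $g\ge 4$ via Lemma \ref{reduce-g} is routine once a single such base example is in hand.
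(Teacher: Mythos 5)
Your overall architecture matches the paper's: an explicit non-Cohen-Macaulay base example in height $4$, padded with $g-4$ extra matching edges $e_5,\dots,e_g$ on fresh vertices, with Lemma \ref{reduce-g} transferring the failure of Cohen-Macaulayness to the disjoint union; that reduction, including unmixedness and admissibility of the padded clutter, is carried out correctly and is exactly what the paper does (the paper phrases it as $I(\C)=IR+JR$ with $J=\langle e_5,\dots,e_g\rangle$). However, there is a genuine gap at the point you yourself flag as the crux: you never actually produce the base example. Saying that one ``adjoins a small collection of admissible edges chosen to force the existence of a bad vertex cover pair while keeping every minimal vertex cover of size $4$'' is a search strategy, not a construction; the existence of such a clutter is precisely the content of the theorem, and nothing in your argument guarantees the search succeeds (note that the paper's example is $5$-uniform, with four extra edges $x_1y_2z_3w_3u_4$, $x_1y_1z_2w_2u_3$, $x_1y_1z_3w_3u_3$, $x_1y_2z_2w_2u_4$ added to the matching $e_i=x_iy_iz_iw_iu_i$; whether a $4$-uniform example exists is not addressed). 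In addition, even once a candidate satisfying condition (*) is found, Theorem \ref{thm.g=4} only rules out linear quotients, and, as you correctly observe, one must still verify separately that $I(\C_0)^\vee$ fails to have a linear resolution (equivalently, via Theorem \ref{thm.er} or Reisner's criterion, that $R/I(\C_0)$ is not Cohen-Macaulay); you defer both this verification and the unmixedness check to an unspecified computation. The paper settles both by explicit computer calculation (CoCoA/Macaulay 2) on its concrete example, so until you exhibit a concrete clutter and carry out (or otherwise certify) those checks, the proof is incomplete.
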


\begin{proof} We shall construct such a clutter $\C$ explicitly. Let $R = k[\x,\y,\z,\w,\u]$, where $\x = (x_1, \dots, x_g), \y = (y_1, \dots, y_g), \z = (z_1, \dots, z_g), \w = (w_1, \dots, w_g), \u = (u_1, \dots, u_g)$. Take $\C$ to be the clutter over the vertices $\x \cup \y \cup \z \cup \w \cup \u$ with edge set
$$E(\C) = \langle e_1, \dots, e_g, x_1y_2z_3w_3u_4, x_1y_1z_2w_2u_3, x_1y_1z_3w_3u_3, 
x_1y_2z_2w_2u_4 \rangle$$
where $e_i = x_iy_iz_iw_iu_i$ for all $i = 1, \dots, g$ (here, by abusing notation, we identify an edge with the corresponding monomial). 

By construction, $\C$ is uniform. By verifying with conditions in Definition \ref{def.admissible}, it can be seen that $\C$ is admissible. To prove the unmixedness of $\C$ we need to show that if $C$ is a minimal vertex cover of $\C$ then $|C| = g$. We first have $|C \cap e_i| \ge 1$ for all $i = 1, \dots, g$. It also follows from the minimality of $C$ that $|C \cap e_i| = 1$ for $i \ge 5$ (since $e_i$ is the only edge of $\C$ involving the vertices $\{x_i, y_i, z_i, w_i, u_i\}$). Observe that if $C$ is a minimal vertex cover of $\C$, then $C \cap \{x_1, \dots, x_4, y_1, \dots, y_4, \dots, u_1, \dots, u_4 \}$ is a minimal vertex cover of the clutter consisting of edges $$\{e_1, e_2, e_3, e_4, x_1y_2z_3w_3u_4, x_1y_1z_2w_2u_3, x_1y_1z_3w_3u_3, 
x_1y_2z_2w_2u_4\}.$$
Thus, by a direct computation (either with CoCoA \cite{cocoa} or Macaulay 2 \cite{macaulay2}), we further have $|C \cap e_i| = 1$ for $i=1,2,3,4$. Therefore, $|C| = g$.

Finally, we show that $\C$ is not a Cohen-Macaulay clutter. Let 
$S = K[x_1, \dots, x_4,$ $y_1, \dots, y_4, \dots, u_1, \dots, u_4], T = K[x_5, \dots, x_g, y_5, \dots, y_g, \dots, u_5, \dots, u_g],$
and let 
$$I = \langle e_1, e_2, e_3, e_4, x_1y_2z_3w_3u_4, x_1y_1z_2w_2u_3, x_1y_1z_3w_3u_3, 
x_1y_2z_2w_2u_4 \rangle \subset S$$ 
and $J = \langle e_5, \dots, e_g \rangle \subset T.$
Observe that $I(\C) = IR + JR$. By a direct computation (either with CoCoA \cite{cocoa} or Macaulay 2 \cite{macaulay2}), we have $S/I$ is not a Cohen-Macaulay ring. It now follows from Lemma \ref{reduce-g} that $R/I(\C)$ is not a Cohen-Macaulay ring. That is, $\C$ is not a Cohen-Macaulay clutter.
\end{proof}

\begin{remark} When $g=4$, the clutter $\C$ constructed in Theorem \ref{counterexamples} satisfies condition (*).
\end{remark}


\end{document}